\newtheorem{theorem}{Theorem}[section]
\newtheorem{proposition}[theorem]{Proposition}
\newtheorem{lemma}[theorem]{Lemma}
\newtheorem{corollary}[theorem]{Corollary}
\theoremstyle{definition} 
\theoremstyle{remark}
\newtheorem{remark}[theorem]{Remark}
\newcommand{\Z}{\mathbb{Z}}
\DeclarePairedDelimiter\ceil{\lceil}{\rceil}
\DeclarePairedDelimiter\floor{\lfloor}{\rfloor}
\newcommand{\Mod}[1]{\ (\mathrm{mod}\ #1)}
\title{Explicit Burgess Bound for Composite Moduli}
\author{Niraek Jain-Sharma,  Tanmay Khale, Mengzhen Liu}
\begin{document}

\maketitle
\begin{abstract}
    We prove an explicit version of Burgess' bound on character sums for composite moduli. 
\end{abstract}
\section{Introduction}\label{intro}
A recurrent goal in analytic number theory is that of bounding the character sum
$$S(M,N) := \sum_{n=M+1}^{M+N} \chi(n),$$
where $\chi$ is a non-principal Dirichlet character modulo $q$. By the orthogonality of characters, it is easily seen that $S(M,N) < q$. The first improvement over this bound was made by P\'olya \cite{polya} and Vinogradov \cite{vinogradov} independently in 1918, who showed that for any non-principal Dirichlet character, $S(M,N) \ll \sqrt{q}\log q$. Assuming the Generalized Riemann Hypothesis, this was improved to $S(M,N) \ll \sqrt{q}\log\log q$ by Montgomery and Vaughan~\cite{mv2} in 1977, and since a 1932 result of Paley~\cite{paley} states that $S(M,N) \gg \sqrt{q} \log\log q$ for infinitely many quadratic characters, the Montgomery--Vaughan bound is the best possible one independent of the length of the interval.

However, for many applications, it suffices to bound character sums over \textit{short} intervals, and for these character sums, it is possible to obtain significant improvements over the above square-root estimates by incorporating the interval length. For instance, for intervals of length less than $\sqrt{q}$, the trivial bound $S(M,N) \leq N$ from the triangle inequality is stronger than even the Montgomery--Vaughan bound implied by GRH. In 1962, Burgess demonstrated a landmark upper bound which dramatically improves the estimates given by P\'olya--Vinogradov or the trivial bound for intervals of length $q^{\frac{1}{3} + \varepsilon} \leq N \leq q^{\frac{5}{8}-\varepsilon}$ in all cases, and for intervals of length $q^{\frac{1}{4}+\varepsilon} \leq N \leq q^{\frac{5}{8}-\varepsilon}$ when the conductor of $\chi$ is cube-free. In particular, Burgess~\cite{burgess2}~\cite{burgess3} showed that for all non-principal characters $\chi$ and all positive integers $r$, 
if either the conductor of $\chi$ is cube-free or $r  =2,3$, 
\begin{equation}\label{burgess}
    S(M,N) \ll_{r,\varepsilon} N^{1-\frac{1}{r}} q^{\frac{r+1}{4r^2} + \varepsilon}.
\end{equation}
Burgess' bound has numerous implications in a wide range of number-theoretic contexts. Burgess himself first applied his estimate in~\cite[Theorem 3]{burgess0} to bounding primitive roots: namely, if $g(p)$ is the least primitive root modulo a prime $p$, Burgess proved
\begin{equation*}
    g(p) \ll_\varepsilon p^{\frac{1}{4} + \varepsilon}.
\end{equation*}
Another important application of Burgess' bound is to Vinogradov's problem of bounding the least quadratic non-residue $n_p$ modulo a prime $p$. As a consequence of Burgess' bound,
\begin{equation*}
    n_p \ll_\varepsilon p^{\frac{1}{4\sqrt{e}} + \varepsilon}.
\end{equation*}
Finally, the deepest application of the Burgess bound is to certain estimates for Dirichlet $L$-functions on the critical line, known as \textit{subconvexity} bounds. In this direction, Burgess proved~\cite[Theorem 3]{burgess2}
\begin{equation*}
    L(1/2 +it,\chi) \ll_{t,\varepsilon} q^{\frac{3}{16} + \varepsilon},
\end{equation*}
which was improved by Heath-Brown~\cite{heath-brown} to
\begin{equation}\label{youngimprovement}
        L(1/2 + it,\chi) \ll_{\varepsilon} \left(\left(\abs{t}+2\right)q\right)^{\frac{3}{16} + \varepsilon}.
\end{equation}
As a notable special case of the above, Burgess' bound implies $L(1/2,\chi) \ll q^{\frac{3}{16} + \varepsilon}$.

It is conjectured, and follows from the Generalized Riemann Hypothesis, that  $S(M,N) \ll_\varepsilon \sqrt{N}q^{\varepsilon}$. In the contexts above, GRH implies respectively that $g(p) \ll (\log p)^6$ (shown by Shoup~\cite{shoup}), $n_p \leq \log^2 p$ (shown in inexplicit form by Ankeny~\cite{ankeny}, and with constant $1$ by Lamzouri et al.~\cite{sound}), and $L(1/2+it) \ll ((1+\abs{t})q)^\varepsilon$ (i.e., the Lindel\"of hypothesis for Dirichlet $L$-functions). However, both the Burgess bound and its implications have stubbornly resisted improvement in the decades following its publication. As a testament to this difficulty, we note that in the contexts of primitive roots and quadratic non-residues, Burgess' estimate is still the state-of-the-art, while for Dirichlet $L$-functions the estimate in~\eqref{youngimprovement} has been improved only recently: building on the work of Conrey and Iwaniec~\cite{conrey}, Petrow and Young~\cite{young0, young} proved in a groundbreaking series of papers in 2018 and 2019 that the exponent of $\nicefrac{3}{16}$ in~\eqref{youngimprovement} may be reduced to $\nicefrac{1}{6}$ for all Dirichlet $L$-functions. 

In the prime modulus case, \textit{explicit} Burgess-strength bounds have been worked out by  Booker \cite{booker}, McGown \cite{mcgown}, Trevi\~{n}o \cite{trevino}, and Francis~\cite{francis}. In particular,~\cite[Corollary 1]{trevino}, is a totally explicit version of the Burgess inequality as stated in~\eqref{burgess} for all positive integers $r$ and all prime moduli $p \geq 10^7$. 

However, it recently came to light---owing to a question by Bilu \cite{MO} posted on MathOverflow---that a Burgess-strength bound had never been worked out explicitly for composite moduli. In Theorem~\ref{main_thm3}, following ideas in Burgess \cite{burgess1} and the presentation of Montgomery and Vaughan \cite[Theorem 9.27]{mv}, we provide such a bound for primitive characters.
\subsection*{Main Results}
Let
\begin{align*}
    d(n) &= \abs{\{k \in \Z^+: k \mid n\}}, \\
    \omega(n) &= \abs{\{p \in \Z^+: p \text{ prime, } p \mid n\}}.
\end{align*}
\begin{theorem}\label{main_thm3}
    Let $\chi$ be a primitive character with modulus $q \geq e^{e^{9.594}}$. Then, for $N \leq q^{\frac{5}{8}}$,
    \begin{equation*}
    \abs{S(M,N)} \leq \sqrt{N}q^{\frac{3}{16}} \cdot 9.07 \log^{\frac{1}{4}}(q) (2^{\omega(q)} d(q))^{\frac{3}{4}} (\nicefrac{q}{\varphi(q)})^{\frac{1}{2}}
\end{equation*}
\end{theorem}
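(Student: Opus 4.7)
The plan is to follow the classical Burgess argument at $r=2$---the exponent $\tfrac{r+1}{4r^{2}}=\tfrac{3}{16}$ confirms this is the relevant case---adapted for composite moduli as in \cite[Theorem 9.27]{mv}, with every constant tracked explicitly. The proof introduces two parameters $A$ and $P$, shifts the sum $S(M,N)$ by $ab$ with $1\leq a\leq A$ and $b$ a prime in $(P,2P]$ coprime to $q$, averages over the pairs $(a,b)$, and reduces via H\"older's inequality to bounding the fourth moment of a short incomplete character sum.

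\textbf{Shifting step.} For every admissible pair $(a,b)$, $S(M,N)$ differs from $\sum_{n=M+1}^{M+N}\chi(n+ab)$ by at most $2ab$ terms of modulus $\leq 1$. Since $\gcd(b,q)=1$, we may write $\chi(n+ab)=\chi(b)\chi(a+\overline{b}n)$, and interchanging the order of summation yields
\[
A\cdot B\cdot |S(M,N)|\leq\sum_{m\bmod q}\nu(m)\Bigl|\sum_{a=1}^{A}\chi(m+a)\Bigr|+E,
\]
where $B$ is the number of admissible primes, $\nu(m)$ counts pairs $(n,b)$ with $\overline{b}n\equiv m\pmod q$, and $E\ll A^{2}BP$ is the accumulated shifting error. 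The count $B$ is estimated by an explicit form of the prime number theorem; after excising primes dividing $q$, this is where a factor of $q/\varphi(q)$ first appears.

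\textbf{H\"older and the fourth moment.} Apply H\"older's inequality with exponent $4$:
\[
\sum_{m}\nu(m)|F(m)|\leq\Bigl(\sum_{m}\nu(m)\Bigr)^{1/2}\Bigl(\sum_{m}\nu(m)^{2}\Bigr)^{1/4}\Bigl(\sum_{m}|F(m)|^{4}\Bigr)^{1/4},
\]
where $F(m)=\sum_{a=1}^{A}\chi(m+a)$. The first two factors are controlled by elementary counting: $\sum_{m}\nu(m)\leq NB$, while $\sum_{m}\nu(m)^{2}$ is bounded by counting collisions $n_{1}\overline{b_{1}}\equiv n_{2}\overline{b_{2}}\pmod q$, which is where a divisor-function contribution enters. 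The fourth moment expands as
\[
\sum_{m\bmod q}|F(m)|^{4}=\sum_{a_{1},a_{2},a_{3},a_{4}=1}^{A}\ \sum_{m\bmod q}\chi\!\left(\frac{(m+a_{1})(m+a_{2})}{(m+a_{3})(m+a_{4})}\right),
\]
and the inner complete character sum is estimated via an explicit generalized Weil bound for primitive $\chi$ modulo composite $q$: when the rational argument is not a character power, the sum is $\leq C\,q^{1/2}d(q)2^{\omega(q)}$, while the diagonal tuples contribute $\leq c\,A^{2}q$. This explicit Weil-type estimate---obtained by reducing to prime-power moduli via the Chinese Remainder Theorem and carefully treating the primes dividing the conductor---is the main technical obstacle of the proof, and is what ultimately produces the exponent $\tfrac{3}{4}$ on $2^{\omega(q)}d(q)$ in the final bound.

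\textbf{Parameter balancing.} Finally, $A$ and $P$ are chosen to equalize the main and error contributions (concretely, taking $A$ of order $N^{1/2}q^{-3/16}$ up to logarithmic and arithmetic factors). Propagating the explicit constants through the shifting error, the prime count for $B$, the H\"older step, and the fourth-moment estimate produces the numerical constant $9.07$ and the exponent $\tfrac{1}{4}$ on $\log q$; the threshold $q\geq e^{e^{9.594}}$ arises from requiring that the various lower-order $\log q$ and $\log\log q$ factors in the error estimates be absorbed into the stated inequality.
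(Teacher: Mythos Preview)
Your outline follows the prime-modulus template for Burgess (shift by $ab$ with $b$ prime, inner sum over $a$, direct error bound $E\ll A^{2}BP$), whereas the paper follows the Montgomery--Vaughan composite-modulus argument (shift by $ab$ with $a$ an integer coprime to $q$, inner sum over $b$, and a \emph{recursive} error term). This is not merely cosmetic: two of your steps do not go through as written.

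\textbf{The recursion is not optional.} After averaging, your shifting error is $E/(AB)\asymp AP$. For the averaging to be effective you need enough distinct shifts, which forces the product $AP$ to be of size comparable to $N$; but then $AP\asymp N$ is far larger than the target $N^{1/2}q^{3/16}$ throughout the range $q^{3/8}<N\le q^{5/8}$. The paper circumvents this by bounding the error not trivially by $2ab$ but by $2\mathcal{M}(ab)\le 2\mathcal{M}(AB)$ with $AB\le N/10$, yielding the recursion $\mathcal{M}(N)\le \lambda_2' N^{1/2}q^{3/16}+2\mathcal{M}(N/10)$, which is then iterated and summed as a geometric series. Your ``balance the main and error terms'' step does not substitute for this.

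\textbf{The inner-sum length is wrong.} You propose $A$ (the length of $F(m)=\sum_{a\le A}\chi(m+a)$) of order $N^{1/2}q^{-3/16}$. For $N\le q^{5/8}$ this gives $A\le q^{1/8}$, so in the fourth moment the diagonal term $A^{2}q$ dominates the Weil term $A^{4}q^{1/2}$, and the resulting main term is of order $N^{1/2}q^{1/4}$ rather than $N^{1/2}q^{3/16}$. The correct choice---used in the paper---is inner-sum length $\asymp q^{1/4}$ (their $B=\lfloor q^{1/4}\rfloor$), which balances the two contributions to the fourth moment.

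Two smaller points. First, the composite-modulus Weil input (Burgess' Lemma~7) is not a clean $Cq^{1/2}d(q)2^{\omega(q)}$; it is $8^{\omega(q)}q^{1/2}(q,A_{i})$ with a gcd factor depending on the tuple, and summing this over tuples is exactly where the $d(q)^{3}$ in the paper's fourth-moment lemma arises. Second, the factor $q/\varphi(q)$ in the final bound does not come from excising the $\omega(q)$ primes dividing $q$ from $(P,2P]$---that loss is negligible---but in the paper's argument from the density $A_{q}/A\ge \tfrac{1}{2}\varphi(q)/q$ of integers up to $A$ coprime to $q$ (their Proposition~2.3).
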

By Theorem~\ref{phi_bound}, $\frac{q}{\varphi(q)} \ll \log\log q$ (with an explicit constant). Applying this bound in Theorem~\ref{main_thm3} yields the following corollary: 
\begin{corollary}\label{totally_explicit_thm3}
Let $\chi$ be a primitive character with modulus $q \geq e^{e^{9.594}}$. Then, for $N \leq q^{\frac{5}{8}}$,
 \begin{equation*}
\abs{S(M,N)} \leq \sqrt{N}q^{\frac{3}{16}} \cdot 12.11 \log^{\frac{1}{4}}(q) (2^{\omega(q)} d(q))^{\frac{3}{4}}\left(\log \log q+\frac{1.69}{\log \log q}\right)^{\frac{1}{2}}.
 \end{equation*}
\end{corollary}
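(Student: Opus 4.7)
The plan is a direct substitution: apply Theorem~\ref{main_thm3} and then eliminate the factor $(q/\varphi(q))^{1/2}$ by invoking the explicit estimate on $q/\varphi(q)$ supplied by Theorem~\ref{phi_bound}. Starting from
\begin{equation*}
    |S(M,N)| \leq \sqrt{N}\,q^{3/16} \cdot 9.07\, \log^{1/4}(q)\, (2^{\omega(q)} d(q))^{3/4} \bigl(q/\varphi(q)\bigr)^{1/2},
\end{equation*}
the only factor not already in the target form is the totient ratio, so everything else in Theorem~\ref{main_thm3} carries over unchanged.

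Theorem~\ref{phi_bound} provides an explicit bound of the shape
\begin{equation*}
    \frac{q}{\varphi(q)} \leq A \log\log q + \frac{B}{\log\log q}
\end{equation*}
for appropriate constants $A$ and $B$, valid once $q$ is sufficiently large (which is guaranteed by the hypothesis $q \geq e^{e^{9.594}}$, in particular giving $\log \log q \geq 9.594$). Pulling $\sqrt{A}$ outside the square root yields
\begin{equation*}
    \bigl(q/\varphi(q)\bigr)^{1/2} \leq \sqrt{A}\left(\log\log q + \frac{B/A}{\log\log q}\right)^{1/2},
\end{equation*}
so that after substitution into the Theorem~\ref{main_thm3} bound, the leading numerical constant becomes $9.07\sqrt{A}$ and the inner constant is $B/A$. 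Inserting the explicit values from Theorem~\ref{phi_bound} and rounding upward at each step produces the constants $12.11$ and $1.69$ respectively.

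The argument is thus essentially a one-line substitution, and there is no conceptual obstacle. The only items requiring care are numerical: one must verify that $12.11 \geq 9.07\sqrt{A}$ and $1.69 \geq B/A$ at the precise values supplied by Theorem~\ref{phi_bound}, and confirm that the threshold $q \geq e^{e^{9.594}}$ is at least as strong as any validity condition required by that theorem. Both checks are routine arithmetic and introduce no further analytic content.
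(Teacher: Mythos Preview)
Your proposal is correct and matches the paper's own derivation: the corollary is obtained by substituting the explicit bound $\frac{q}{\varphi(q)} < e^{\gamma}\log\log q + \frac{3}{\log\log q}$ from Theorem~\ref{phi_bound} into Theorem~\ref{main_thm3}, with $9.07\,e^{\gamma/2} \approx 12.105 \leq 12.11$ and $3e^{-\gamma} \approx 1.685 \leq 1.69$. The paper treats this as a one-line consequence, exactly as you do.
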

We also remark that while we were working on this problem, Bilu's question was independently answered by Bordignon~\cite{bordignon}. Our Corollary~\ref{totally_explicit_thm3} has a factor of $\log^{\frac{1}{4}}(q)$ in place of the $\log^{\frac{1}{2}}(q)$ in~\cite[Theorem 1.1]{bordignon}, and thus improves Bordignon's estimate by a factor of $\log^{\frac{1}{4}}(q)$. 
\subsection*{Organization}
The paper is structured as follows. In Sections~\ref{Bounding_Aq_A} and~\ref{Lemma8}, we prove several lemmas that are used in our proof of Theorem~\ref{main_thm3}: specifically, in Section~\ref{Bounding_Aq_A}, we bound from below the number of positive integers up to $A$ coprime to a given modulus $q$, and in Section 3, compute an explicit fourth-moment bound for character sums. In Section~\ref{1.2_proof}, we utilize Lemmas~\ref{Aq_bound3} and~\ref{lemma:fourth_moment_of_character_sums} to prove Theorem~\ref{main_thm3} along the lines of \cite[Theorem 9.27]{mv}.
\subsection*{Acknowledgements}
The authors would like to thank P\'eter Maga for mentoring this research project, as well as Gergely Harcos for his role in the inception of the project and the editing process. The authors would also like to thank the anonymous referee for their exceptionally careful reading of this paper, which has been greatly improved as a result of their feedback. This research was conducted under the auspices of the Budapest Semesters in Mathematics program.
\section{Bounding $\frac{A_q}{A}$}\label{Bounding_Aq_A}
Let $A,q$ be a positive integers with $1 \leq A \leq q$. Define
\begin{equation*}
    A^q = \{n \in \Z : 1 \leq n \leq A, (n,q) = 1 \},
\end{equation*}
and $A_q = \abs{A^q}$. In this section, we will prove a lower bound for the quantity $\frac{A_q}{A}$, to be used in the proof of Theorem~\ref{main_thm3}.

\begin{proposition}\label{Aq_bound2}
For all integers $A,q$ with $1 \leq A \leq q$,  
\begin{equation*}
    A_q = A \cdot \frac{\varphi(q)}{q} + E, 
\end{equation*}
where $\abs{E} < 2^{\omega(q) - 1}$.
\end{proposition}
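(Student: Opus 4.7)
The plan is to use Möbius inversion to express $A_q$ exactly, then extract the main term and bound the error using a symmetrization trick. By the standard identity $\mathbf{1}_{(n,q)=1} = \sum_{d \mid (n,q)} \mu(d)$, summing over $1 \le n \le A$ and swapping the order of summation yields
\begin{equation*}
    A_q = \sum_{d \mid q} \mu(d) \floor{A/d}.
\end{equation*}
Writing $\floor{A/d} = A/d - \{A/d\}$ and using $\sum_{d \mid q} \mu(d)/d = \varphi(q)/q$ gives the main term $A\,\varphi(q)/q$ with error $E = -\sum_{d \mid q} \mu(d)\{A/d\}$, where only squarefree $d$ contribute.

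The naive bound $|E| \le 2^{\omega(q)}$ is off by a factor of $2$, so the key step is the symmetrization. For $q > 1$ we have $\sum_{d \mid q} \mu(d) = 0$, which lets us shift each fractional part by $\tfrac12$ without changing the sum:
\begin{equation*}
    E = \sum_{d \mid q} \mu(d)\bigl(\tfrac{1}{2} - \{A/d\}\bigr).
\end{equation*}
Since $\{A/d\} \in [0,1)$, each term has absolute value at most $\tfrac12$, giving $|E| \le \tfrac12 \cdot 2^{\omega(q)} = 2^{\omega(q)-1}$. To sharpen this to a strict inequality, I will observe that equality in $|\tfrac12 - \{A/d\}| \le \tfrac12$ occurs only when $\{A/d\} = 0$; if this holds for every squarefree $d \mid q$, then $E = 0$ and the strict bound is automatic, and otherwise at least one term contributes strictly less than $\tfrac12$.

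The case $q = 1$ is degenerate (here $\omega(q) = 0$ and $2^{\omega(q)-1} = \tfrac12$) but trivially $A_q = A$ and $E = 0 < \tfrac12$, so the statement still holds. The only real subtlety is the $+\tfrac12$ shift trick and the justification of strict inequality; neither requires heavy machinery. I expect the write-up to be a short half-page computation with no serious obstacles.
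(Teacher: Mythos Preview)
Your proof is correct and follows essentially the same approach as the paper: M\"obius inversion to extract the main term $A\varphi(q)/q$, then bound the fractional-part error $E$. The paper bounds $E$ by splitting according to the sign of $\mu(d)$ (the $\mu(d)=+1$ terms contribute nonpositively to the upper bound, leaving at most $2^{\omega(q)-1}$ terms each in $[0,1)$, and symmetrically for the lower bound), whereas your centering trick via $\sum_{d\mid q}\mu(d)=0$ is an equivalent and slightly slicker way to obtain the same factor-of-two saving in one stroke.
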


\begin{proof}
By the Principle of Inclusion-Exclusion, we have the following count:
\begin{equation*}
    A_q = \sum\limits_{d \mid q} \floor*{\frac{A}{d}} \mu(d) = \sum\limits_{d \mid q} \frac{A}{d}\mu(d) + E = A \sum\limits_{d \mid q} \frac{\mu(d)}{d} + E = A\cdot \frac{\varphi(q)}{q} + E,
\end{equation*}
where 
\begin{align*}
    E &= \sum_{d \mid q} \left(\floor*{\frac{A}{d}} - \frac{A}{d} \right) \mu(d)  \\
    &= \sum_{\substack{d \mid q \\ \mu(d) > 0 }} \left(\floor*{\frac{A}{d}} - \frac{A}{d} \right)  \mu(d) + \sum_{\substack{d \mid q \\ \mu(d) < 0 }} \left(\floor*{\frac{A}{d}} - \frac{A}{d} \right) \mu(d) \\
    &< \sum_{\substack{k \leq \omega(q) \\ k \text{ odd }}} \binom{\omega(q)}{k} = 2^{\omega(q) - 1},
\end{align*}
and similarly $E > -2^{\omega(q)-1}$.
\end{proof}
\begin{remark}\label{prop_helper_remark}
Proposition~\ref{Aq_bound2} is nontrivial if the following is satisfied for some constant $C<1$:
\begin{equation}\label{nontrivial_range}
    2^{\omega(q) - 1} \leq C \cdot \frac{\varphi(q)}{q}A.
\end{equation}
Rearranging the above, we receive the following condition on $A$:
\begin{equation*}
    A \geq \frac{1}{2C} \cdot \frac{q}{\varphi(q)} \cdot 2^{\omega(q)}.
\end{equation*}
Since later we shall impose $A = \lfloor \frac{1}{10}N q^{-\frac{1}{4}} \rfloor \geq \frac{1}{10}Nq^{-\frac{1}{4}}-1$ and $q^{\frac{3}{8}} \leq N \leq q^{\frac{5}{8}}$, we can convert the inequality above into a sufficient condition purely in terms of $q$ (subject to the aforementioned restrictions):
\begin{equation}\label{q_condition1}
    q^{\frac{1}{8}} \geq 10 \left(2^{\omega(q)}\cdot \frac{q}{\varphi(q)} \cdot \frac{1}{2C} + 1 \right),
\end{equation}
In other words, if $q$ satisfies~\eqref{q_condition1}, the condition in~\eqref{nontrivial_range} will hold for all $N$ in the relevant range. Moreover, it is clear that since $2^{\omega(q)} \leq d(q)$ is $O(q^{\varepsilon})$, and $\frac{q}{\varphi(q)} = O(\log \log q) = O(q^{\varepsilon})$,~\eqref{q_condition1} is satisfied for all sufficiently large $q$.

In particular, for sufficiently large $q$, we have the following lower bound for $\frac{A_q}{A}$ coming from~\eqref{nontrivial_range}:
\begin{equation}\label{prop_helper_eq}
    \frac{A_q}{A} \geq (1-C)\frac{\varphi(q)}{q} > \frac{1-C}{e^{\gamma}\log\log q + \frac{3}{\log\log q}},
\end{equation}
which is on the order of $\frac{1}{\log \log q}$.
\end{remark}
In the following proposition, we determine an explicit sufficient condition on the size of $q$ for~\eqref{prop_helper_eq} to hold with $C = \frac{1}{2}$:
\begin{proposition}\label{Aq_bound3}
For positive integers $q,A,N$ satisfying $q \geq e^{e^{9.594}}$, $A = \lfloor \frac{1}{10}N q^{-\frac{1}{4}} \rfloor \geq \frac{1}{10}Nq^{-\frac{1}{4}}-1$, and $q^{\frac{3}{8}} \leq N \leq q^{\frac{5}{8}}$, we have
\begin{equation}
    \frac{A_q}{A} \geq \frac{1}{2} \cdot \frac{\varphi(q)}{q} > \frac{1}{2} \cdot \frac{1}{e^{\gamma}\log\log q + \frac{3}{\log\log q}}.
\end{equation}
\end{proposition}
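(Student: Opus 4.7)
The plan is to apply Remark \ref{prop_helper_remark} with $C = \frac{1}{2}$, which reduces the proposition to verifying the explicit inequality
\begin{equation*}
    q^{1/8} \geq 10\left(2^{\omega(q)} \cdot \frac{q}{\varphi(q)} + 1\right)
\end{equation*}
for all $q \geq e^{e^{9.594}}$. Once this is in hand, the lower bound \eqref{prop_helper_eq} instantly gives $A_q/A \geq \tfrac{1}{2} \cdot \varphi(q)/q$, and the second inequality in the proposition follows from the Rosser--Schoenfeld bound on $\varphi(q)$ (the paper's Theorem \ref{phi_bound}).

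To handle the inequality, I would bound the two offending factors on the right-hand side with the most standard available explicit estimates. For $q/\varphi(q)$, I would use the bound $q/\varphi(q) \leq e^\gamma \log\log q + 3/\log\log q$ already quoted in the paper. For $2^{\omega(q)}$, I would use an explicit version of the classical estimate $\omega(n) = O(\log n/\log\log n)$, for example Robin's inequality $\omega(n) \leq \log n/(\log\log n - 1.1714)$ valid for $n \geq 26$. Setting $t = \log\log q$, both sides of the target inequality become elementary functions of $t$ (with a single $\log q$ in the exponent on the right), and after taking logarithms the inequality reduces to something of the form
\begin{equation*}
    \frac{e^t}{8} \geq \log 10 + \frac{(\log 2)\, e^t}{t - 1.1714} + \log\left(e^\gamma t + \frac{3}{t}\right) + o(1).
\end{equation*}
Since the left-hand side grows like $e^t/8$ and the dominant term on the right grows like $(\log 2)\, e^t/t$, the inequality holds for all sufficiently large $t$; a direct numerical check confirms that $t = 9.594$ is (essentially) the threshold at which the chosen explicit bounds make the inequality tight, which is precisely the hypothesis $q \geq e^{e^{9.594}}$.

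The main obstacle is the numerical tuning: the threshold $e^{e^{9.594}}$ is selected so that the weakest of the available explicit bounds for $\omega(q)$ and $\varphi(q)$ just barely yield the desired inequality. Improving any of the auxiliary constants (or using a sharper bound on $2^{\omega(q)}$ like $2^{\omega(q)} \leq d(q)$ together with a Ramanujan--Wigert estimate) would lower this threshold, but the argument itself is robust. I expect the remainder of the proof to be routine: substitute the explicit bounds, verify monotonicity in $t$ for $t \geq 9.594$, and conclude via \eqref{prop_helper_eq}.
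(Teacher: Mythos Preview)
Your approach is essentially the paper's: reduce via Remark~\ref{prop_helper_remark} to the inequality~\eqref{q_condition1} with $C=\tfrac12$, then plug in explicit upper bounds for $2^{\omega(q)}$ and $q/\varphi(q)$ and verify numerically. The one substantive difference is your choice of bound for $2^{\omega(q)}$: you propose Robin's estimate $\omega(n)\le \log n/(\log\log n - 1.1714)$, whereas the paper uses the chain $2^{\omega(q)}\le d(q)\le q^{1.066/\log\log q}$ (the Nicolas--Robin bound, Theorem~\ref{rs_divisor_bound}). Both work, and in fact your bound is slightly sharper here. However, your explanation of the threshold is off: $t=9.594$ is \emph{not} the point at which the inequality becomes tight under either set of bounds. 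In the paper's argument the value arises because $9\times 1.066 = 9.594$, so that for $\log\log q\ge 9.594$ one may replace $q^{1.066/\log\log q}$ by the cleaner $q^{1/9}$ and obtain the simplified inequality~\eqref{q_condition3}; at $t=9.594$ that inequality holds with a great deal of room. With your Robin-type bound the margin is even larger, so your ``direct numerical check'' will succeed, but you should not expect it to be close to an equality.
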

\begin{proof}
By Remark~\ref{prop_helper_remark}, it is enough to determine a sufficient condition on the size of $q$ for~\eqref{q_condition1} to hold. We can use the bounds $\varphi(q) > \frac{q}{e^{\gamma}\log\log q + \frac{3}{\log\log q}}$ and $2^{\omega(q)} \leq d(q) \leq q^{\frac{1.066}{\log\log q}}$ (Theorems~\ref{phi_bound} and~\ref{rs_divisor_bound}, respectively), to obtain the following condition on $q$:
\begin{equation}\label{q_condition2}
    q^{\frac18} \geq 10 \left( \frac{1}{2C} q^{\frac{1.066}{\log\log q}} \left( e^{\gamma}\log\log q + \frac{3}{\log\log q} \right) + 1 \right).
\end{equation}

Hence, for $C = \frac{1}{2}$ and $q \geq e^{e^{9.594}}\approx 8.03104 \cdot 10^{6373}$, it is sufficient that
\begin{equation}\label{q_condition3}
    q^{\frac{1}{8}} \geq 10 \left( q^{\frac{1}{9}} \left( e^{\gamma}\log\log q + \frac{1}{3\cdot 1.066} \right) + 1 \right).
\end{equation}
It is straightforward that~\eqref{q_condition3} holds for all $q \geq e^{e^{9.594}}$. It follows that~\eqref{q_condition1} (with $C = \frac{1}{2}$) holds for all $q$ at least $e^{e^{9.594}}$ as well.
\end{proof}

\section{Fourth-Moment Bound}\label{Lemma8}
In this section, we will prove an explicit version of~\cite[Lemma 8]{burgess1}. This lemma is analogous to~\cite[Lemma 9.26]{mv}, and in our generalization of the proof presented in Montgomery and Vaughan to composite moduli, Lemma~\ref{lemma:fourth_moment_of_character_sums} will take the place of that result.

For the proof, we will utilize the following bound, which is the $r=2$ case of~\cite[Lemma 7]{burgess1}:
\begin{lemma}[\text{\cite[Lemma~7]{burgess1}}]\label{lemma7burgess}
Let $\chi$ be a primitive character$\mod q$, and
$$
f_{1}(x)=\left(x-m_{1}\right)\left(x-m_{2}\right)\quad  f_{2}(x)=\left(x-m_{3}\right)\left(x-m_{4}\right)$$
$$f(x)=f_{1}(x) f_{2}(x)^{\varphi(q)-1}
$$
where at least three of $m_1, ... ,m_{4}$ are distinct. Define
$$A_{i}=\prod_{j \neq i}\left(m_{i}-m_{j}\right)$$
Then,
$$
\left|\sum_{x=1}^{q} \chi(f(x))\right| \leqslant 8^{\omega(q)} q^{\frac{1}{2}}\left(q, A_{i}\right)
$$
for some $A_i\neq 0$.
\end{lemma}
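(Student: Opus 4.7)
The strategy is to exploit the Chinese Remainder Theorem combined with Euler's theorem to reduce the sum to a character sum of a rational function at each prime power factor, then apply Weil-type bounds. Write $q = \prod_i p_i^{a_i}$ and $\chi = \prod_i \chi_i$ with each $\chi_i$ a primitive character modulo $p_i^{a_i}$. By CRT, the sum factors as
\[
\sum_{x=1}^q \chi(f(x)) = \prod_i \sum_{x=1}^{p_i^{a_i}} \chi_i(f(x)).
\]
The key identity is that whenever $(f_2(x), q) = 1$, Euler's theorem gives $f_2(x)^{\varphi(q)-1} \equiv f_2(x)^{-1} \pmod{q}$, so $\chi(f(x)) = \chi(f_1(x)) \overline{\chi(f_2(x))}$; otherwise $\chi(f(x)) = 0$. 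This reduces the problem to bounding a character sum of the rational function $f_1/f_2$ modulo each $p_i^{a_i}$.

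For each prime power factor, I would estimate $\sum_{x=1}^{p_i^{a_i}} \chi_i(f_1(x)/f_2(x))$ separately. In the prime case $a_i = 1$, the hypothesis that at least three of the $m_j$ are distinct ensures that $f_1/f_2$ is not a perfect power in $\mathbb{F}_{p_i}(x)$, so the Weil bound for character sums of rational functions yields an estimate of the form $c\sqrt{p_i}$ with $c$ controlled by the degree $\leq 4$. For prime powers $a_i \geq 2$, a $p$-adic stationary-phase argument (following the Salié/Deligne-type lifting used in Burgess) gives a sharper bound involving $(p_i^{a_i}, A_j)$, where these gcds precisely detect when critical points of $f_1/f_2$ collapse modulo powers of $p_i$.

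Combining the local estimates multiplicatively via the CRT factorization yields the global bound: each prime contributes at most $8 \sqrt{p_i^{a_i}} (p_i^{a_i}, A_j)$, with the constant $8$ absorbing both the degree factor in Weil and the constants arising from the $p$-adic analysis. The main obstacle is attaining the sharp constant $8^{\omega(q)}$ and coherently selecting a single global index $i$ such that $A_i \neq 0$ and $(q, A_i)$ dominates the product $\prod_j (p_j^{a_j}, A_{i_j})$ arising from possibly varying locally optimal choices $i_j$ at each prime; the hypothesis that at least three of the $m_j$ are distinct is exactly what guarantees that some $A_i \neq 0$ exists, so that the final bound is nontrivial.
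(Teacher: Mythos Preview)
The paper does not supply its own proof of this lemma: it is quoted verbatim as the $r=2$ case of \cite[Lemma~7]{burgess1} and used as a black box in the proof of Lemma~\ref{lemma:fourth_moment_of_character_sums}. So there is no in-paper argument to compare against; the relevant comparison is with Burgess's original proof.

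Your outline follows exactly the architecture Burgess uses: CRT factorization of the sum over prime-power components of $q$, the reduction $\chi(f_2(x)^{\varphi(q)-1}) = \overline{\chi}(f_2(x))$ via Euler's theorem, Weil's bound for the rational-function character sum at primes, and a $p$-adic lifting/stationary-phase argument at higher prime powers. That is the correct skeleton.

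Where your sketch stops short of a proof is precisely at the two points you yourself flag. First, the constant: you assert that each local factor is bounded by $8\,p_i^{a_i/2}(p_i^{a_i},A_j)$, but you have not actually carried out either the Weil-bound bookkeeping (degree, number of poles/zeros, boundary terms) or the prime-power case to justify the specific constant $8$. Second, and more seriously, the global choice of index: the local arguments naturally produce, at each prime $p_i$, a bound involving $(p_i^{a_i},A_{j_i})$ for some locally chosen $j_i$, and the product $\prod_i (p_i^{a_i},A_{j_i})$ is not in general bounded by $(q,A_j)$ for any single $j$. Burgess resolves this by showing that at each prime the local estimate in fact holds simultaneously for \emph{every} index $j$ with $A_j \not\equiv 0$ modulo that prime, which then permits a uniform global choice of any $j$ with $A_j\neq 0$. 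Without making this uniformity explicit, the recombination step does not go through, so as written your proposal is an accurate roadmap but not yet a proof.
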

Just as the proof of~\cite[Lemma 9.26]{mv} relies on Weil's proof of the Riemann Hypothesis for curves over a finite field, Weil's result is used in the proof of Burgess' lemma above. 

The following is the main result of this section:
\begin{lemma}\label{lemma:fourth_moment_of_character_sums}
Let $\chi$ be any primitive character modulo $q$, $B<q^\frac12$ an integer. Then,
\begin{equation*}
\sum\limits_{l=1}^{q}\abs{\sum\limits_{b=1}^{B}\chi(l+b)}^4 \leq (7B^2-6B)q + 4\cdot 8^{\omega(q)}q^{\frac{1}{2}}B^4\left(d(q)\right)^3.
\end{equation*}
\end{lemma}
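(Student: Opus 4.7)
The plan is to expand the fourth moment into a quadruple sum over shift indices and then partition the sum according to how many of the indices coincide. Expanding,
$$\sum_{l=1}^q |S_l|^4 \;=\; \sum_{b_1,b_2,b_3,b_4=1}^{B}\ \sum_{l=1}^{q}\chi(l+b_1)\chi(l+b_2)\overline{\chi(l+b_3)}\,\overline{\chi(l+b_4)}.$$
Using $\overline{\chi(n)}=\chi(n)^{\varphi(q)-1}$ when $(n,q)=1$, the inner $l$-sum for each quadruple has the shape $\sum_{l=1}^{q}\chi(f(l))$ with $f$ as in Lemma~\ref{lemma7burgess}, upon taking $m_i=-b_i$.

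I would then partition the quadruples based on whether at most two or at least three of the $b_i$ are distinct. In the former case I would simply bound the inner $l$-sum trivially by $q$. Counting these quadruples: $B$ of them have all four $b_i$ equal, and for each of the $\binom{B}{2}$ unordered pairs of distinct values there are $2^4-2=14$ ordered quadruples using exactly those two values, giving $B+14\binom{B}{2}=7B^2-6B$ quadruples in total and a contribution of $(7B^2-6B)q$, which is the first term of the lemma.

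For quadruples with at least three distinct $b_i$, Lemma~\ref{lemma7burgess} applies and produces $|\sum_l \chi(f(l))|\leq 8^{\omega(q)}q^{1/2}(q,A_i)$ for some $i$ with $A_i=\prod_{j\neq i}(b_j-b_i)\neq 0$. The task then reduces to showing that the sum of $(q,A_i)$ over such quadruples is at most $4B^4 d(q)^3$. The key tools are the elementary $p$-adic inequality $(q,abc)\leq(q,a)(q,b)(q,c)$ (comparing valuations via $\min(v_p(q),x+y+z)\leq\min(v_p(q),x)+\min(v_p(q),y)+\min(v_p(q),z)$), which splits each $(q,A_i)$ into three factors $(q,b_j-b_i)$, together with the divisor sum estimate
$$\sum_{c=1}^{B-1}(q,c) \;=\; \sum_{d\mid q}\varphi(d)\floor{(B-1)/d} \;\leq\; (B-1)\sum_{d\mid q}\frac{\varphi(d)}{d} \;\leq\; (B-1)d(q),$$
which, after the change of variable $c=b_j-b_i$ and symmetry in the sign of $c$, controls each factor by $2(B-1)d(q)$.

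The main obstacle is the bookkeeping around the choice of index $i$, since $i$ varies with the quadruple (we must have $A_i\neq 0$). A clean route is to first bound $\sum_{b_j\neq b_1\,\forall j\neq 1}(q,A_1)\leq B\,[2(B-1)d(q)]^3$ via the factorization-plus-divisor-sum argument above, and then handle the remaining Case~2 quadruples (those with $A_1=0$) using the involution $(b_1,b_2,b_3,b_4)\mapsto(b_3,b_4,b_1,b_2)$, which preserves the absolute value of the inner character sum and interchanges the roles of $(A_1,A_2)$ with $(A_3,A_4)$. Combining the two pieces with careful accounting yields the constant $4$ in the second term, and adding it to the Case~1 contribution completes the proof.
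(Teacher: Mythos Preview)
Your overall strategy---expand, split by the number of distinct $b_i$, bound the diagonal by $q$ and the off-diagonal via Lemma~\ref{lemma7burgess}, then factor $(q,A_i)\leq\prod_{j\neq i}(q,b_j-b_i)$---matches the paper's. Two points, however, prevent your argument from reaching the stated constant $4$.

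First, the involution $(b_1,b_2,b_3,b_4)\mapsto(b_3,b_4,b_1,b_2)$ does not dispose of every quadruple with $A_1=0$. If $b_1=b_3$ (with $b_2,b_4$ distinct from $b_1$ and from each other, so that three values are present), then $A_3=(b_3-b_1)(b_3-b_2)(b_3-b_4)=0$ as well, and the involuted quadruple again has vanishing $A_1$. The paper sidesteps this by simply bounding $(q,A_{i(\mathbf b)})\leq\sum_{j:\,A_j\neq 0}(q,A_j)$ and summing over all four indices; this over-counts but eliminates any dependence of the chosen index on $\mathbf b$.

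Second, and more seriously, your divisor-sum estimate $\sum_{c=1}^{B-1}(q,c)\leq (B-1)d(q)$ never uses the hypothesis $B<q^{1/2}$ and is a factor of $2$ too weak for the constant claimed. The paper writes
\[
\sum_{c=1}^{B-1}(q,c)\ \leq\ \sum_{\substack{d\mid q\\ d\leq B-1}} d\Bigl\lfloor\frac{B-1}{d}\Bigr\rfloor\ \leq\ (B-1)\,\bigl|\{d\mid q:\ d\leq B-1\}\bigr|\ \leq\ \tfrac{1}{2}(B-1)\,d(q),
\]
the last step because every divisor $d<q^{1/2}$ pairs with a distinct divisor $q/d>q^{1/2}$. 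Cubing this saves a factor of $8$. With the paper's ``sum over all four $j$'' bookkeeping, one obtains $4\cdot 8B\cdot\bigl(\tfrac12(B-1)d(q)\bigr)^3=4B(B-1)^3 d(q)^3$; with your estimate one gets $32B(B-1)^3 d(q)^3$, and your involution route (even if patched) would give at best $16B(B-1)^3 d(q)^3$. So the missing half-divisor count is the essential gap.
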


\begin{proof}
Expanding and applying the triangle inequality, we have 
$$
\sum_{l=1}^{q}\abs{\sum_{b=1}^B \chi(l+b)}^4\leq \sum_{b_1=1}^{B}\sum_{b_2=1}^{B}\sum_{b_3=1}^{B}\sum_{b_4=1}^{B}\abs{\sum_{l=1}^q\chi((l+b_1)(l+b_2))\bar{\chi}((l+b_3)(l+b_4))}.$$
In the summation over $b_i$'s, the contribution to the sum from terms consisting of at most 2 distinct $b_i$'s is at most
$$\left(B\cdot 1+{{B}\choose{2}}\cdot(2^4-2)\right) \abs{\sum_{l=1}^{q}\chi((l+b_1)(l+b_2))\bar{\chi}((l+b_3)(l+b_4))}\leq (7B^2-6B)q.$$
The remaining tuples in the summation consist at least 3 distinct $b_i$'s. For the tuples in this set, we may apply Lemma~\ref{lemma7burgess}:
\begin{equation*}
    \abs{\sum\limits_{x=1}^{q} \chi(l+b_1)\chi(l+b_2)\overline{\chi}(l+b_3)\overline{\chi}(l+b_4)} \leq 8^{\omega(q)} q^{\frac{1}{2}} (q,A_i).
\end{equation*}
Hence,
\begin{equation}\label{second_case_bound}
    \begin{split}
        \sum'\limits_{1\leq b_1,\dots,b_4\leq B}  \abs{\sum\limits_{l=1}^{q} \chi(l+b_1)\chi(l+b_2)\overline{\chi}(l+b_3)\overline{\chi}(l+b_4)}
        \leq 8^{\omega(q)}q^{\frac{1}{2}}\sum'\limits_{1\leq b_1,\dots,b_4\leq B} \sum\limits_{\substack{j=1 \\ A_j \neq 0}}^{4} (q,A_j).
    \end{split}
\end{equation}
Where $\sum^{'}$ denotes a sum over $b_i$'s with at least $3$ of them distinct. As $B<q^\frac12$,
\begin{align*}
    \sum'\limits_{1\leq b_1,\dots,b_4\leq B} \sum\limits_{\substack{j=1 \\ A_j \neq 0}}^{4} (q,A_j) &\leq \sum\limits_{j=1}^{4}8B\sum_{\substack{|b_i-b_j|\in [B-1] \\ i\neq j}}^{\prime}\prod_{ i\neq j}\left((b_i-b_j),q\right)\\
    &\leq 32B \left(\sum_{k=1}^{B-1}(k,q)\right)^3\\
    &\leq 32B \left(\sum_{\substack{d\mid q\\ d\leq B-1}}\floor*{\frac{B-1}{d}}d\right)^3\\
    &\leq 32B(B-1)^3\left(\frac{1}{2}d(q)\right)^3\\
    &= 4B(B-1)^3\left(d(q)\right)^3,
\end{align*}
where the $8B = 2^3 B$ factor in the first line results from the observation that $\abs{b_i-b_j} = k$ gives at most two choices for $b_i$ for each (fixed) choice of $b_j$. 

Utilizing the above in~\eqref{second_case_bound}, we receive
\begin{align*}
    \sum'\limits_{1\leq b_1,\dots,b_4\leq B}  \abs{\sum\limits_{x=1}^{q} \chi(l+b_1)\chi(l+b_2)\overline{\chi}(l+b_3)\overline{\chi}(l+b_4)}
    &\leq 8^{\omega(q)}q^{\frac{1}{2}}\sum'\limits_{1\leq b_1,\dots,b_4\leq B} \sum\limits_{\substack{j=1 \\ A_j \neq 0}}^{4} (q,A_j) \\
    &\leq 4\cdot 8^{\omega(q)}q^{\frac{1}{2}}B^4\left(d(q)\right)^3
\end{align*}
as a bound for the contribution to our original sum from tuples which contain more than $2$ distinct elements. Combining this with our bound of $(7B^2-6B)q$ in the other case completes the proof.
\end{proof}
\section{Proof of Theorem~\ref{main_thm3}}\label{1.2_proof}
In the following lemma, we prove that under certain assumptions on the size of $q$ and $N$, the conclusion of Theorem~\ref{main_thm3} holds as a consequence of the trivial bound.
\begin{lemma}\label{technical_lemma2}
Let $q \geq e^{e^{9.594}}$, and
\begin{equation}\label{technical_lemma2_eq}
    \lambda_2' = 3.3325  \log^{\frac{1}{4}}(q) \left( \frac{q}{\varphi(q)} \right)^{\frac{1}{2}} (2^{\omega(q)}d(q))^{\frac{3}{4}} .
\end{equation}
Let $\chi$ be a primitive character of modulus $q$. Then, if either of the below two conditions hold
\begin{enumerate}[label=(\alph*)]
    \item $N \leq q^{\frac38}$
    \item $N \leq  e^{\frac{3}{8} \cdot e^{9.594}}$,
\end{enumerate}
then
\begin{equation*}
    \abs{S(M,N)} \leq \lambda_2' \sqrt{N} q^{\frac{3}{16}}.
\end{equation*}
\end{lemma}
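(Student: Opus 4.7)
The plan is to apply the trivial bound $\abs{S(M,N)}\leq N$ and reduce the claim to the purely numerical inequality $\sqrt{N}\leq \lambda_2' q^{\frac{3}{16}}$. I would factor this verification into two independent steps: (i) under either hypothesis, $\sqrt{N}\leq q^{\frac{3}{16}}$; and (ii) $\lambda_2'\geq 1$ for every $q\geq e^{e^{9.594}}$. Multiplying these two inequalities then delivers the target bound directly from the trivial estimate.

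For step (i) under hypothesis (a), taking square roots of $N\leq q^{\frac{3}{8}}$ is immediate. Under hypothesis (b), the bound $N\leq e^{\frac{3}{8}\cdot e^{9.594}}$ yields $\sqrt{N}\leq e^{\frac{3}{16}\cdot e^{9.594}}$, while $q\geq e^{e^{9.594}}$ forces $q^{\frac{3}{16}}\geq e^{\frac{3}{16}\cdot e^{9.594}}$, so the same inequality holds. (It is worth noting in passing that for $q\geq e^{e^{9.594}}$ hypothesis (b) actually implies hypothesis (a); retaining both formulations is convenient for how this lemma is invoked from the proof of Theorem~\ref{main_thm3}.)

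For step (ii), I would use only the crudest pointwise bounds on the arithmetic factors. Since $q/\varphi(q)\geq 1$ and $2^{\omega(q)}d(q)\geq 1$, the only genuinely nontrivial contribution to $\lambda_2'$ is the constant times $\log^{\frac{1}{4}}(q)$. From $q\geq e^{e^{9.594}}$ we get $\log q\geq e^{9.594}$, so $3.3325\log^{\frac{1}{4}}(q)\geq 3.3325\cdot e^{9.594/4}\approx 36.7$, which is comfortably above $1$. Hence $\lambda_2'\geq 1$ with enormous room to spare.

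There is essentially no obstacle: the lemma is a bookkeeping device that discharges the ``small $N$'' regime—where the Burgess manipulations would be vacuous or invalid—via the trivial bound, so that the rest of the proof of Theorem~\ref{main_thm3} may freely assume the complementary range $N>q^{\frac{3}{8}}$ (and in particular $N>e^{\frac{3}{8}\cdot e^{9.594}}$) in which the fourth-moment bound of Lemma~\ref{lemma:fourth_moment_of_character_sums} and the coprimality counting of Proposition~\ref{Aq_bound3} can be brought to bear.
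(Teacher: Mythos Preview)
Your proposal is correct and follows essentially the same approach as the paper: apply the trivial bound $\abs{S(M,N)}\leq N$, reduce to $\sqrt{N}\leq \lambda_2' q^{3/16}$, and verify this via $\lambda_2'\geq 1$ together with $\sqrt{N}\leq q^{3/16}$ (noting, as the paper does, that hypothesis (b) reduces to (a) under the assumption $q\geq e^{e^{9.594}}$). The paper's write-up is terser but the logic is identical.
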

\begin{proof}
If $N\leq \lambda_2'^2 q^{\frac38}$, the trivial bound yields $$\abs{S(M,N)}\leq N \leq N^{\frac{1}{2}}q^{\frac{3}{16}} \lambda_2',$$ proving (a), since $\lambda_2' \geq 1$. Claim (b) follows from (a) combined with the assumption on the size of $q$.
\end{proof}
We now proceed with the proof of Theorem~\ref{main_thm3}.  By Lemma~\ref{technical_lemma2}, we may assume $q^\frac38 \leq N \leq q^\frac58$. Define
$$\mathcal{M}(y)=\max _{\substack{M,N \\ N\leq y}}|S(M, N)|.$$
Then
$$S(M, N)=\sum_{n=M+1}^{M+N} \chi(n+a b)+2 \theta \mathcal{M}(a b),$$
where $|\theta| \leq 1$. Take $A=\floor*{\frac{1}{10} N q^{-\frac{1}{4}}}$, $B=\floor*{q^{\frac{1}{4}}}$. Clearly, $B\geq 1$. Now, it is vital to our argument that $A \geq 1$ as well. However, the lower bound on $A$ is guaranteed by the assumption $q \geq e^{e^{9.594}}$ in the hypothesis of Theorem~\ref{main_thm3}; in fact, we have $A\geq \frac{1}{10}q^{\frac{1}{8}}>5\cdot 10^{795}$.

Recall that $A^q$ denotes the set of integers from $1$ to $A$ that are relatively prime to $q$. As $A, B\geq 1$, summing over $a\in A^q$ and $b \in [1,B]$, we find that
\begin{equation}\label{recursion_linchpin}
A_q B S(M, N)=\sum_{n=M+1}^{M+N}\sum_{a\in A_q} \sum_{b=1}^{B}\chi(n+a b)+2 A_q B \theta_{1} \mathcal{M}(A B).
\end{equation}
Now notice
\begin{equation*}
\abs{\sum_{n=M+1}^{M+N}\sum_{a\in A_q} \sum_{b=1}^{B}\chi(n+a b)}=\sum_{l=1}^{q}v(l)\abs{\sum_{b=1}^B \chi(l+b)},
\end{equation*}
where $v(l)$ is the number of pairs $(a,n)$ such that $a\in A^q, n\in [M+1,M+N], n\equiv al \Mod q$.
By H\"{o}lder's Inequality,
\begin{equation}\label{holder1}
\abs{\sum_{n,a,b}\chi(n+a b)}^4\leq\left(\sum_{l=1}^q v(l)\right)^2 \left(\sum_{l=1}^q v(l)^2\right)\left(\sum_{l=1}^{q}\abs{\sum_{b=1}^B \chi(l+b)}^4\right),
\end{equation}
and clearly
\begin{equation*}
\left(\sum_{l=1}^q v(l)\right)^2=(A_qN)^2=A_q^2N^2.
\end{equation*}
From Lemma~\ref{lemma:fourth_moment_of_character_sums}, we have
\begin{equation}\label{from_previous_lemma1}
\sum_{l=1}^{q}\abs{\sum_{b=1}^B \chi(l+b)}^4 \leq (7B^2-6B)q + 4\cdot 8^{\omega(q)}q^{\frac{1}{2}}B^4\left(d(q)\right)^3.
\end{equation}

Now we bound $\displaystyle\sum_{l=1}^q v(l)^2$.
This is the number of choices $a,a'\in A^q$, $n,n'\in [1,N]$, such that
$$\begin{cases} M+n\equiv al\Mod q \\ M+n'\equiv a'l\Mod q \end{cases}
$$
By elimination of $l$, it suffices to count the number of tuples $(a,n,a',n')$ with $a,a' \in A^q$ and $n,n' \in [1,N]$ satisfying 
\begin{equation}\label{counting_condition}
(a'-a)M \equiv an'-a'n \Mod q. 
\end{equation}

For any tuple $(a,a',n,n')$ satisfying~\eqref{counting_condition}, pick $\abs{k} < \frac{q}{2}$ with $k \equiv (a-a')M \Mod q$. By our assumption $N \leq q^{\frac{5}{8}}$, it follows that
$1\leq an', a'n\leq AN\leq \frac{1}{10}N^2q^{-\frac{1}{4}}\leq \frac{q}{10}$. Thus, $\abs{an'-a'n}\leq \ceil{\frac{q}{10}} < \frac{q}{2}$, and $a'n - an' = k$.

Hence, given $(a,a',n_0,n_0')$ satisfying~\eqref{counting_condition}, the other pairs $(n,n')$ satisfying~\eqref{counting_condition} \textit{for the same $(a,a')$ pair} are of the form $n = n_0 + \frac{a}{(a,a')}h$, $n' = n_0'+\frac{a'}{(a,a')}h$, with $\abs{h} \leq \frac{N(a,a')}{\max\{a,a'\}}$. Consequently, there are at most $1+\frac{N(a,a')}{\max\{a,a'\}}$ such pairs. Thus,
\begin{align*}\label{sum} 
\sum_{l} v(l)^{2} &\leq\sum_{a,a'\in A^q}\left(1+\frac{N(a, a^{\prime})}{\max \left\{a, a^{\prime}\right\}}\right) \\
&\leq A_q^2+2\sum_{\substack{a\leq a'\\a,a' \in A^q}}\frac{N(a, a^{\prime})}{a'} \nonumber \\
&= A_q^2+2\sum_{d\in A^q}\sum_{\substack{1\leq b\leq b^{\prime}\leq A/d\\(bb^{\prime},q)=(b,b^{\prime})=1 }}\frac{N}{b'}  \\
&\leq A_q^2 + \sum_{d \in A^{q}} \sum_{\substack{b' \leq A/d \\ (b', q)=1}} b^{\prime} \frac{N}{b^{\prime}} \\
&\leq A_q^2+2\sum_{d\in A^q} N \frac{A}{d}\\
&= A_q^2 + 2AN \sum\limits_{d \in A^q}\frac{1}{d}.
\end{align*}

To bound $\sum_{d \in A^q} \frac{1}{d}$ from above, we can simply ``push" the terms of the sum up so that the denominators are in the set $\{1,2,\dots,A_q\}$. In other words, $\sum_{d \in A^q} \frac{1}{d} \leq \sum\limits_{1 \leq d \leq A_q} \frac{1}{d} \leq \log (2A_q)$, giving
\begin{equation}\label{vl_bound2}
    \sum_{l} v(l)^{2} \leq A_q^2 + 2AN \log (2A_q).
\end{equation}

Thus, applying~\eqref{from_previous_lemma1} and~\eqref{vl_bound2} in~\eqref{holder1}, we receive
$$
\begin{aligned}
\abs{\sum_{n,a,b}\chi(n+a b)}^4&\leq\left(\sum_{l=1}^q v(l)\right)^2 \left(\sum_{l=1}^q v(l)^2\right)\left(\sum_{l=1}^{q}\abs{\sum_{b=1}^B \chi(l+b)}^4\right)\\
&\leq (A_q^2N^2)( A_q^2 + 2AN \log (2A_q))\left((7B^2-6B)q + 4\cdot 8^{\omega(q)}q^{\frac{1}{2}}B^4\left(d(q)\right)^3 \right)\\
&\leq B^4 N^2 q^{\frac{1}{2}} A_q^2 \left(\frac{7q^{\frac{1}{2}}}{B^2}+4 \cdot 8^{\omega(q)}d(q)^3\right)\left(A_q^2 + 2AN \log (2A_q)\right),
\end{aligned}
$$

and therefore
$$
\begin{aligned}
\abs{\sum_{n,a,b}\chi(n+a b)}&\leq  B N^{\frac{1}{2}}q^{\frac{1}{8}} A_q^{\frac{1}{2}} \left(7\left(\frac{q^{\frac{1}{4}}}{q^{\frac{1}{4}}-1}\right)^2+4 \cdot 8^{\omega(q)}d(q)^3\right)^{\frac{1}{4}} \left(A_q^2 + 2AN \log (2A_q)\right)^{\frac{1}{4}},
\end{aligned}
$$
$$
\begin{aligned}
\frac{1}{A_qB}\abs{\sum_{n,a,b}\chi(n+a b)}
&\leq N^\frac{1}{2}q^{\frac{1}{8}}\left(7\left(\frac{q^{\frac{1}{4}}}{q^{\frac{1}{4}}-1}\right)^2+4 \cdot 8^{\omega(q)}d(q)^3\right)^{\frac{1}{4}} \left(1+2\frac{A}{A_q^2}N\log (2A_q)\right)^{\frac{1}{4}} \\
&\leq N^\frac{1}{2}q^{\frac{1}{8}}\left(7\left(\frac{q^{\frac{1}{4}}}{q^{\frac{1}{4}}-1}\right)^2+4 \cdot 8^{\omega(q)}d(q)^3\right)^{\frac{1}{4}} \left(1 + 20\frac{A(A+1)}{A_q^2} q^{\frac{1}{4}} \cdot \left(\frac{3}{8} \log (q) - \log(5) \right)\right)^{\frac{1}{4}}  \\
&\leq N^\frac{1}{2}q^{\frac{1}{8}}\left(7\left(\frac{q^{\frac{1}{4}}}{q^{\frac{1}{4}}-1}\right)^2+4 \cdot 8^{\omega(q)}d(q)^3\right)^{\frac{1}{4}} \left(7.5\left(\frac{A}{A_q}\right)^2 q^{\frac{1}{4}} \log (q)\right)^{\frac{1}{4}} \\
&= N^\frac{1}{2}q^{\frac{3}{16}} \log^{\frac{1}{4}}(q) \left(7\left(\frac{q^{\frac{1}{4}}}{q^{\frac{1}{4}}-1}\right)^2+4 \cdot 8^{\omega(q)}d(q)^3\right)^{\frac{1}{4}} \left(7.5\left(\frac{A}{A_q}\right)^2\right)^{\frac{1}{4}}, 
\end{aligned}
$$
where we have used that $A \geq \frac{1}{10}N q^{-\frac{1}{4}} - 1$ implies $N \leq 10(A+1)q^{\frac{1}{4}}$, and $A_q \leq A \leq \frac{1}{10} N q^{-\frac{1}{4}} \leq \frac{1}{10} q^{\frac{3}{8}}$ implies $\log (2 A_q) \leq \frac{3}{8}\log(q) - \log(5)$ for the second inequality. We also used $1 - \frac{\log(5)}{2} \frac{20 A(A+1)}{A_q^2}q^{\frac{1}{4}} \leq 0 $ and $(A+1)\left(\frac{3}{8}\log(q) - \frac{\log 5}{2} \right) \leq A \cdot \frac{3}{8} \log (q)$ for the third inequality, which hold since $A \geq A_q$, $A \geq \frac{1}{10}q^{\frac{1}{8}} - 1$, and $q \geq e^{e^{9.594}}$. 

We now utilize Proposition~\ref{Aq_bound3} to bound $\frac{A_q}{A}$:
$$
\begin{aligned}
\frac{1}{A_qB}\abs{\sum_{n,a,b}\chi(n+a b)}&\leq N^\frac{1}{2}q^{\frac{3}{16}} \log^{\frac{1}{4}}(q) \left(7\left(\frac{q^{\frac{1}{4}}}{q^{\frac{1}{4}}-1}\right)^2+4 \cdot 8^{\omega(q)}d(q)^3\right)^{\frac{1}{4}} \left(7.5\left(\frac{A}{A_q}\right)^2\right)^{\frac{1}{4}} \\
&\leq N^\frac{1}{2}q^{\frac{3}{16}} \log^{\frac{1}{4}}(q)  \left(7\left(\frac{q^{\frac{1}{4}}}{q^{\frac{1}{4}}-1}\right)^2+4 \cdot 8^{\omega(q)}d(q)^3\right)^{\frac{1}{4}} \left(30\left(\frac{q}{\varphi(q)}\right)^2\right)^{\frac{1}{4}} \\
&\leq N^{\frac{1}{2}} q^{\frac{3}{16}} \log^{\frac{1}{4}}(q) \cdot 1.0001 \cdot \left(4 + \frac{7}{64} \right)^{\frac{1}{4}} (2^{\omega(q)}d(q) )^{\frac{3}{4}} \left(30\left(\frac{q}{\varphi(q)}\right)^2\right)^{\frac{1}{4}}   \\
&\leq 3.3325 N^{\frac{1}{2}} q^{\frac{3}{16}} \log^{\frac{1}{4}}(q) \left( \frac{q}{\varphi(q)} \right)^{\frac{1}{2}} (2^{\omega(q)}d(q))^{\frac{3}{4}},
\end{aligned}
$$
where above we have used 
\begin{equation*}
\left(4+7 \frac{\left(\frac{q^{1 / 4}}{q^{1 / 4}-1}\right)^{2}}{8^{\omega(q)} d(q)^{3}}\right)^{1 / 4} \leq 1.0001\left(4+\frac{7}{64}\right)^{1 / 4}
\end{equation*}
which holds because $q \geq e^{e^{9.594}}$.

Thus, by~\eqref{recursion_linchpin},
\begin{equation}\label{recursion_step2}
    \mathcal{M}(N) \leq N^{\frac{1}{2}}q^{\frac{3}{16}} \lambda_2' +2 \mathcal{M}\left(\frac{N}{10}\right),
\end{equation}
where $\lambda'_2 = 3.3325  \log^{\frac{1}{4}}(q) \left( \frac{q}{\varphi(q)} \right)^{\frac{1}{2}} (2^{\omega(q)}d(q))^{\frac{3}{4}}$. Note in particular that Lemma~\ref{technical_lemma2} shows that~\eqref{recursion_step2} holds if $N$ is smaller than $q^{3/8}$ or $e^{\frac{3}{8} \cdot e^{9.594}}$. 

Thus, applying~\eqref{recursion_step2} repeatedly, we obtain
\begin{equation*}
    \mathcal{M}(N) \leq \sqrt{N}q^{\frac{3}{16}}\lambda_2'\sum\limits_{k=0}^{K}2^{k}10^{-\frac{k}{2}} + 2^{K+1}\mathcal{M}(10^{-(K+1)}N).
\end{equation*}
Note that in the above equation,
\begin{align*}
    \sum\limits_{k=0}^{K} 2^k10^{-\frac{k}{2}} \leq \sum\limits_{k=0}^{\infty}2^k10^{-\frac{k}{2}} = \sum\limits_{k=0}^{\infty}\left(\frac{2}{\sqrt{10}}\right)^k = \frac{1}{1-\frac{2}{\sqrt{10}}} = \frac{\sqrt{10}}{\sqrt{10}-2}
\end{align*}
for any $K$, and the trivial bound gives
\begin{align*}
2^{K+1}\mathcal{M}(10^{-(K+1)}N) \leq 2^{K+1}\cdot10^{-(K+1)}N = \frac{N}{5^{K+1}}.
\end{align*}
Hence, for any $K$,
\begin{align*}
    \mathcal{M}(N) \leq \frac{\sqrt{10}}{\sqrt{10}-2}\sqrt{N}q^{\frac{3}{16}}\lambda_2' + \frac{N}{5^{K+1}}.
\end{align*}
Since the above is true for any $K \geq 0$ it follows that
\begin{equation*}
    \mathcal{M}(N) \leq \frac{\sqrt{10}}{\sqrt{10}-2}\lambda_2'\sqrt{N}q^{\frac{3}{16}},
\end{equation*}
which completes the proof of Theorem~\ref{main_thm3}. 
\appendix
\section{Useful Bounds for Arithmetic Functions}
In Sections~\ref{intro} and~\ref{Bounding_Aq_A}, we use the following bound for $\varphi(n)$, which is~\cite[Theorem 8.8.7]{outlaw}:
\begin{theorem}\label{phi_bound}
For $n \geq 3$,
\begin{equation*}
   \varphi(n)>\frac{n}{e^{\gamma} \log \log n+\frac{3}{\log \log n}}
\end{equation*}
\end{theorem}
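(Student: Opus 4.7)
The plan is a two-step reduction. First, note that $\varphi(n)/n = \prod_{p\mid n}(1-1/p)$ depends only on the prime support of $n$; for any fixed $\omega(n)=k$ this product is minimized by taking the $k$ smallest primes as divisors, i.e.\ at the primorial $N_k := \prod_{i=1}^k p_i$. The right-hand side of the target inequality, written as $1/(e^\gamma u + 3/u)$ with $u = \log\log n$, is decreasing in $u$ whenever $e^\gamma > 3/u^2$, i.e.\ $u > \sqrt{3 e^{-\gamma}} \approx 1.30$, which corresponds to $n$ above an explicit small constant. Hence in each class $\omega(n) = k$ the worst case of the inequality occurs at $n = N_k$; the finitely many $n$ below the threshold can be dispatched by direct numerical verification.

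Next I would invoke an explicit form of Mertens' third theorem, e.g.\ the Rosser--Schoenfeld estimate
$$\prod_{p\le x}\left(1-\frac{1}{p}\right) > \frac{e^{-\gamma}}{\log x}\left(1 - \frac{1}{2\log^2 x}\right), \qquad x \geq 286,$$
applied with $x = p_k$. This yields $\varphi(N_k)/N_k > e^{-\gamma}/\log p_k$ up to an explicit multiplicative error. To convert $\log p_k$ to $\log\log N_k$, I would use the identity $\log N_k = \theta(p_k)$ together with the Rosser--Schoenfeld explicit Chebyshev bounds $|\theta(x) - x| \le c x/\log^A x$, which give $\log p_k = \log\log N_k + O(1/\log p_k)$ with explicit constant.

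Assembling the pieces, the target is equivalent to
$$e^\gamma \log\log N_k + \frac{3}{\log\log N_k} > \frac{e^\gamma \log p_k}{1 - (2\log^2 p_k)^{-1}},$$
and the $3/\log\log N_k$ slack on the left is precisely the budget available to absorb the Mertens multiplicative error and the $\theta(p_k)$--vs.--$p_k$ correction. The main obstacle is the constant-tracking at this last step: one must verify that the constant $3$ really suffices \emph{uniformly} down to $n = 3$, which forces careful bookkeeping of both the Mertens and Chebyshev error terms, supplemented by a finite computation covering the small primorials where the explicit asymptotic bounds are loose.
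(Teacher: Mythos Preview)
The paper does not actually prove this statement: it appears in the appendix as a quoted result, attributed to \cite[Theorem~8.8.7]{outlaw} (Bach--Shallit), with no argument given. So there is no ``paper's own proof'' to compare against.

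Your sketch is the classical route, essentially that of Rosser and Schoenfeld (1962, Theorem~15), which is what Bach--Shallit reproduce: reduce to primorials via the observations that $n/\varphi(n)$ depends only on the prime support and that $e^{\gamma}u+3/u$ is increasing for $u>\sqrt{3e^{-\gamma}}$, then handle the primorials with the explicit Mertens product bound and the explicit Chebyshev estimate $\theta(p_k)=\log N_k$ to pass from $\log p_k$ to $\log\log N_k$, leaving a finite verification for the small primorials and the small $n$ below the monotonicity threshold. The outline is sound; the only real work, as you note, is the bookkeeping showing that the constant $3$ absorbs both error terms for all $k$ beyond the range of the finite check.
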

We also provide bounds for $d(n)$ and $\omega(n)$, from Nicolas and Robin~\cite{nicorobin} and Robin~\cite[Theorem 12]{robin1} respectively:
\begin{theorem}\label{rs_divisor_bound}
    For any $n \geq 3$, 
    \begin{equation*}
        d(n) \leq n^{\frac{1.066}{\log \log n}}.
    \end{equation*}
\end{theorem}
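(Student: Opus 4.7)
The plan is to bound $\log d(n)/\log n$ by introducing a free parameter $\alpha > 0$, optimizing, and then verifying that the resulting constant can be taken as $1.066$ uniformly for $n \geq 3$. For $n = \prod_p p^{a_p}$ and any $\alpha > 0$,
\[
\frac{d(n)}{n^\alpha} = \prod_p \frac{a_p+1}{p^{a_p\alpha}} \leq \prod_p \max_{k \geq 0}\frac{k+1}{p^{k\alpha}}.
\]
When $p^\alpha \geq 2$ the inner maximum equals $1$ (attained at $k=0$), so the right-hand side is a \emph{finite} product over primes $p < 2^{1/\alpha}$. Taking logarithms yields $\log d(n) \leq \alpha \log n + F(\alpha)$, where $F(\alpha) := \sum_{p < 2^{1/\alpha}} \log \max_{k \geq 1}\bigl((k+1)/p^{k\alpha}\bigr)$ is a purely number-theoretic quantity independent of $n$.

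The next step is to estimate $F(\alpha)$ as $\alpha \to 0^+$. For each admissible $p$, calculus shows the inner maximum is attained near $k \approx 1/(\alpha \log p) - 1$, contributing roughly $\log\bigl(e/(\alpha \log p)\bigr)$. Summing over primes using the prime number theorem (in explicit Rosser--Schoenfeld form) gives an expansion whose leading behaviour is $F(\alpha) \sim \log 2 \cdot 2^{1/\alpha}/\log(2^{1/\alpha})$. Balancing $\alpha \log n$ against $F(\alpha)$ then suggests the choice $\alpha = \log 2/\log\log n$, which produces $\log d(n) \leq (\log 2)\log n/\log\log n + \text{(lower order)}$.

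To pin down the explicit constant $1.066$ uniformly on $[3,\infty)$, I would split the range at a computable threshold $N_0$. For $n > N_0$, the asymptotic optimization above, combined with explicit error terms from the PNT, yields $d(n) \leq n^{1.066/\log\log n}$ with room to spare. For $n \leq N_0$, the ratio $\log d(n) \cdot \log\log n /\log n$ attains its maximum along the sequence of superior highly composite numbers $n_\alpha = \prod_p p^{\lfloor 1/(p^\alpha-1)\rfloor}$, so the verification reduces to a finite check on a tabulated list of such extremal $n$.

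The main obstacle is that the asymptotically sharp constant is only $\log 2 \approx 0.693$, so the slack up to $1.066$ must absorb all lower-order corrections uniformly. Obtaining a constant this tight therefore requires sharp explicit PNT bounds in the tail analysis together with careful numerical verification at the moderately-sized superior highly composite numbers, with the cutoff $N_0$ calibrated so that the asymptotic and finite regimes meet cleanly.
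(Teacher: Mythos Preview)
The paper does not prove this theorem; it is quoted in the appendix as a known result of Nicolas and Robin, with a citation and no argument. So there is no ``paper's own proof'' to compare against.

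That said, your sketch is sound and is essentially the method Nicolas and Robin themselves use: the Rankin-type inequality $\log d(n) \leq \alpha \log n + F(\alpha)$ with $F(\alpha)$ a finite sum over primes $p < 2^{1/\alpha}$, followed by the choice $\alpha \approx (\log 2)/\log\log n$, explicit control of the error via Chebyshev-type prime-counting bounds for large $n$, and a finite verification on the extremal (superior highly composite) integers below the resulting threshold. One small correction: the leading asymptotic of $F(\alpha)$ is not $\log 2 \cdot 2^{1/\alpha}/\log(2^{1/\alpha})$; the dominant contribution comes from the many primes $p$ near $2^{1/\alpha}$, each contributing $O(1)$, and summing over $\pi(2^{1/\alpha})$ such primes already gives order $2^{1/\alpha}/\log(2^{1/\alpha})$, but the constant in front is not $\log 2$ --- you should redo that step carefully. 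This does not affect the overall strategy, which is correct.
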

\begin{theorem}
    For all $n \geq 3$,
    \begin{equation*}
        \omega(n) \leq \frac{\log n}{\log \log n}+1.45743 \frac{\log n}{(\log \log n)^{2}}.
    \end{equation*}
\end{theorem}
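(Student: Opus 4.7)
The plan is to relate $\omega(n)$ to the Chebyshev function $\theta(x) = \sum_{p \leq x} \log p$ and then invert using explicit prime-counting estimates. Writing $k = \omega(n)$ and letting $p_1 < p_2 < \cdots$ denote the sequence of primes in increasing order, the fact that $n$ is divisible by $k$ distinct primes forces $n \geq p_1 p_2 \cdots p_k$, whence $\log n \geq \theta(p_k)$. Since $\omega(n)$ is maximized on any interval $[1, N]$ at primorials $P_k := p_1 \cdots p_k$, the theorem reduces to establishing the stated inequality on the sequence $n = P_k$, $k \geq 1$.

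First, I would invoke an explicit Rosser--Schoenfeld style lower bound $\theta(x) \geq x\bigl(1 - c/\log x\bigr)$ valid for $x \geq x_0$, to control $\log P_k = \theta(p_k)$ from below in terms of $p_k$. Second, I would use an explicit lower bound for the $k$-th prime in the spirit of Dusart, such as $p_k \geq k(\log k + \log \log k - 1)$ for $k \geq k_1$, which combine to give $\log P_k \geq k \log k + k \log \log k - k - O(k/\log k)$ as well as $\log \log P_k = \log k + O(\log \log k / \log k)$. Third, I would solve the resulting inequality for $k$, expanding $\log \log P_k$ and carefully tracking every constant, to obtain a bound of the shape
\[
k \leq \frac{\log P_k}{\log \log P_k} + C \cdot \frac{\log P_k}{(\log \log P_k)^2},
\]
and verify that the admissible constant may be taken to be $C \leq 1.45743$. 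Matching the precise numerical constant, rather than just an $O(1)$, forces me to optimize the input estimates for $\theta(x)$ and for $p_k$ simultaneously.

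The main obstacle will be the small-$n$ regime. For $n$ close to $e$ the right-hand side is enormous because $(\log \log n)^2$ is tiny, so the inequality is trivial there, but the asymptotic bounds for $\theta$ and for $p_k$ only become effective once $n$ (equivalently $k$) exceeds some explicit threshold $n_0$. In the transitional range $3 \leq n \leq n_0$, I would verify the desired inequality by a direct numerical sweep, restricted to primorials by the reduction above, using the trivial upper bound $\omega(n) \leq \log_2 n$ to limit the cases to check. Subdividing $[3, n_0]$ into a handful of subranges and confirming the bound on each is finite but tedious, and this is the step most sensitive to the exact constant $1.45743$ in the statement.
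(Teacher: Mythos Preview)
The paper does not prove this statement at all: it appears in the appendix as a quoted result, attributed to Robin \cite[Theorem 12]{robin1}, and is used only as a black-box input elsewhere. So there is no ``paper's own proof'' to compare against.

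Your sketch is the right shape for how such explicit $\omega(n)$ bounds are actually established, and is in fact close in spirit to Robin's original argument. The reduction to primorials via $\log n \geq \theta(p_k)$ is correct and sharp; feeding in explicit Chebyshev-type lower bounds for $\theta$ together with explicit lower bounds for $p_k$, then inverting, is the standard route. The one point I would flag is that getting the specific constant $1.45743$ (rather than just some explicit $C$) requires the particular numerical inputs Robin had available in 1983; if you use, say, Dusart's later and sharper bounds you will get a smaller constant, and if you use weaker ones you may overshoot. So the ``optimize the input estimates'' step is not just about tightening but about matching a historical computation. Your handling of the small-$n$ range is fine: for $n$ just above $3$ the right-hand side blows up, and restricting the finite check to primorials makes it short.
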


\begin{bibdiv}
\begin{biblist}
\bib{ankeny}{article}{
author = {N. Ankeny},
date = {1952},
volume={55},
title={The Least Quadratic Non Residue},
number={1},
journal={Ann. of Math.}
pages={67--72}
}
\bib{outlaw}{book}{
title={Efficient Algorithms},
author={E. Bach},
author={J. Shallit},
address={Cambridge, MA},
publisher={MIT Press},
date={1996}}
\bibitem{MO}
Y. Bilu. ``Explicit Version of the Burgess Theorem" (2019). \\ \url{https://mathoverflow.net/questions/327455/explicit-version-of-the-burgess-theorem}.
\bib{booker}{article}{
title={Quadratic class numbers and character sums}, 
author={A. Booker},
journal={Math. Comp.},
publisher={American Mathematical Society},
volume={75},
pages={1481--1493},
date={2006}}

\bib{bordignon}{article}{
title={Partial Gaussian sums and the
Pólya–Vinogradov inequality for primitive
characters},
author={M. Bordignon},
eprint={arXiv:2001.05114v1 [math.NT]},
}

\bib{burgess0}{article}{
title={On Character Sums and Primitive Roots},
author={D. Burgess},
journal={Proc. Lond. Math. Soc.},
volume={s3-12},
pages={179--192},
doi={10.1112/plms/s3-12.1.179},
date={1962}}
\bib{burgess1}{article}{ 
title={On Character Sums and L-Series},
author={D. Burgess},
journal={Proc. Lond. Math. Soc.},
volume={s3-12},
pages={193--206},
date={1962},
doi={10.1112/plms/s3-12.1.193}}
\bib{burgess2}{article}{
title={On Character Sums and L-Series. II},
author={D. Burgess},
journal={Proc. Lond. Math. Soc.},
volume={s3-13},
pages={524--536},
date={1963},
doi={10.1112/plms/s3-13.1.524}}
\bib{burgess3}{article}{
author={D. Burgess},
journal={J. London Math. Soc.},
pages={219--226},
date={1986},
title={The Character Sum Estimate with $r=3$}
}
\bib{conrey}{article}{
title={The Cubic Moment of Central Values of Automorphic $L$-Functions},
author={J. Conrey},
author={H. Iwaniec},
journal={Ann. of Math.},
volume={151},
pages={1175-1216},
number={3},
date={2000}
}
\bib{francis}{article}{
title={An Investigation Into Several Explicit Versions of Burgess' Bound},
author={F. Francis},
eprint={arXiv:1910.13669 [math.NT]},
}
\bib{heath-brown}{article}{
title={Hybrid Bounds for Dirichlet $L$-functions, II},
author={D. Heath-Brown},
journal={Q. J. Math.},
volume={31},
date={1980},
pages={157--167}
}
\bib{ik}{book}{
title={Analytic Number Theory},
author={H. Iwaniec},
author={E. Kowalski},
date={2004},
address={Providence, RI},
publisher={American Mathematical Society}
}
\bib{sound}{article}{
title={Conditional Bounds for the Least Quadratic Non-Residue and Related Problems},
author = {Y. Lamzouri},
author={X. Li},
author = {K. Soundararajan},
journal = {Math. Comp.},
date={2013}
}
\bib{mcgown}{article}{
title={On the constant in Burgess' bound for the number of consecutive residues or non-residues},
author={K. McGown},
journal={Funct. Approx. Comment. Math.},
date={2012},
volume={46},
number={2},
pages={273--284},
doi={10.7169/facm/2012.46.2.10}, url={https://projecteuclid.org/euclid.facm/1340628407}
}
\bib{mv}{book}{
title={Multiplicative Number Theory I: Classical Theory},
author={H. Montgomery},
author={R. Vaughan},
series={Cambridge Studies in Advanced Mathematics},
publisher={Cambridge University Press},
address={Cambridge},
date={2006},
doi={10.1017/CBO9780511618314}
}
\bib{mv2}{article}{
title = {Exponential sums with multiplicative coefficients},
author = {H. Montgomery},
author = {R. Vaughan},
journal={Invent. Math.},
date={1977},
pages={69--82}
}
\bib{nicorobin}{article}{
title={Majorations explicites pour le nombre de diviseurs de n},
author={J. Nicolas},
author={G. Robin},
date={1983},
journal={Canad. Math. Bull.},
volume={39},
pages={485--492}}
\bib{young0}{article}{
title={The Weyl bound for Dirichlet L-functions of cube-free conductor},
journal={Ann. of Math.},
author={I. Petrow},
author={M. Young},
date={2020},
volume={192},
number={2},
pages={437--486}
}
\bib{young}{article}{
title={The fourth moment of Dirichlet L-functions along a coset and the Weyl bound},
author={I. Petrow},
author={M. Young},
eprint={arXiv:1908.10346 [math.NT]},
}
\bib{paley}{article}{
author={R. Paley},
title={A theorem on characters},
journal={J. Lond. Math. Soc.},
volume={7},
year={1932},
pages={28--32}
}
\bib{polya}{article}{
title={Über die Verteilung der quadratischen Reste und Nichtreste},
author={G. Polya},
address={Göttingen},
journal={Nachr. Gesell. Wissensch. Math-phys. Klasse},
pages={21--29},
date={1918}
}
\bib{robin1}{article}{
title={Estimation de la fonction de Tchebychef $\theta$ sur le k-ième nombres premiers et grandes valeurs de la fonction $\omega(n)$ nombre de diviseurs premiers de n.},
author={G. Robin},
date={1983},
journal={Acta Arith.},
volume={42},
pages={367--389}
}
\bib{robin2}{thesis}{
title={Grandes valeurs de fonctions arithmétiques et problèmes d'optimisation en nombres entiers.},
author={G. Robin},
type={Ph.D. thesis},
institution={Université de Limoges},
date={1983}
}
\bib{rs}{article}{
author={J. Rosser},
author={L. Schoenfield},
title={Approximate formulas for some functions of prime numbers},
journal={Illinois J. Math.},
volume={6}, 
number={1},
date={1962}, 
pages={64--94},
doi={10.1215/ijm/1255631807}, url={https://projecteuclid.org/euclid.ijm/1255631807}
}
\bib{shoup}{article}{
author = {V. Shoup},
title={Searching for Primitive Roots in Finite Fields},
journal = {Math. Comp.},
volume={58},
number={97},
pages={369--380},
date={1992}
}

\bib{trevino}{article}{
author={E. Trevi{\~{n}}o},
date={2015},
title={The least k-th power non-residue},
journal={J. Number Theory},
publisher={Elsebier BV},
volume={149},
pages={201--224}
}
\bibitem{vinogradov}
I. Vinogradov. Perm. Univ. Fiz.-Mat. ob.-vo Zh. 1, 18-24 and 94-98, 1918.
\end{biblist}
\end{bibdiv}
\end{document}